\definecolor{ANDREW}{RGB}{255,127,0}
\theoremstyle{plain}
\newtheorem{proposition}{Proposition}[section]
\newtheorem{theorem}[proposition]{Theorem}
\newtheorem{lemma}[proposition]{Lemma}
\theoremstyle{definition}
\theoremstyle{remark}
\DeclareMathOperator{\Cc}{\mathcal{C}}
\DeclareMathOperator{\Oc}{\mathcal{O}}
\DeclareMathOperator{\Cb}{\mathbb{C}}
\DeclareMathOperator{\Nb}{\mathbb{N}}
\DeclareMathOperator{\Rb}{\mathbb{R}}
\newcommand{\norm}[1]{\left\|#1\right\|}
\begin{document}

\title[Lower bound for distances]{A lower bound for the K\"ahler-Einstein distance from the  Diederich-Forn{\ae}ss index}
\author{Andrew Zimmer}\address{Department of Mathematics, Louisiana State University, Baton Rouge, LA, USA}
\email{amzimmer@lsu.edu}
\date{\today}
\keywords{}
\subjclass[2010]{}

\begin{abstract} In this note we establish a lower bound for the distance induced by the K\"ahler-Einstein metric on pseudoconvex domains with positive  hyperconvexity index (e.g. positive Diederich-Forn{\ae}ss index). A key step is proving an analog of the Hopf lemma for Riemannian manifolds with Ricci curvature bounded from below.
\end{abstract}

\maketitle

\section{Introduction}

Every bounded pseudoconvex domain $\Omega \subset \Cb^d$ has a unique complete K{\"a}hler-Einstein metric, denoted by $g_{KE}$, with Ricci curvature $-(2d-1)$. This was constructed by Cheng and Yau~\cite{CY1980} when $\Omega$ has $\Cc^2$ boundary and by Mok and Yau~\cite{MY1983} in general. 

Let $d_{KE}$ be the distance induced by $g_{KE}$. Since $g_{KE}$ is complete, if we fix $z_0 \in \Omega$, then 
\begin{align}
\label{eq:limit_to_infinity}
\lim_{z \rightarrow \partial \Omega} d_{KE}(z,z_0) =\infty.
\end{align}
In this note we consider quantitative versions of Equation~\eqref{eq:limit_to_infinity}. In particular, it is natural to ask for lower bounds on $d_{KE}(z,z_0)$ in terms of the distance to the boundary function 
\begin{align*}
\delta_\Omega (z) = \min \{ \norm{w-z} : w \in \partial \Omega \}.
\end{align*}
Mok and Yau proved for every $z_0 \in \Omega$ there exists $C_1, C_2  \in \Rb$ such that 
 \begin{align*}
 d_{KE}(z,z_0) \geq C_1 + C_2 \log   \log  \frac{1}{\delta_\Omega(z)} 
 \end{align*}
 for all $z \in \Omega$, see~\cite[pg.  47]{MY1983}. Further, by considering the case of a punctured disk, this lower bound is the best possible for general pseudoconvex domains. 
 
 However, for certain classes of bounded pseudoconvex domains, there are much better lower bounds. For instance, if $\Omega$ is  convex, then for any $z_0\in \Omega$ there exists $C_1, C_2 > 0$ such that 
 \begin{align}
 \label{eq:good_form}
 d_{KE}(z,z_0) \geq C_1 + C_2 \log  \frac{1}{\delta_\Omega(z)} 
 \end{align}
 for all $z \in \Omega$, see~\cite{F1991}. In this note, we show that Estimate~\eqref{eq:good_form} holds for a large class of domains - those with positive hyperconvexity index. 
 
First we recall the well studied Diederich-Forn{\ae}ss index. Suppose $\Omega \subset \Cb^d$ is a bounded pseudoconvex domain. A number $\tau \in (0,1)$ is a called an \emph{Diederich-Forn{\ae}ss exponent of $\Omega$} if there exist a continuous plurisubharmonic function $\psi :\Omega \rightarrow (-\infty,0)$ and a constant $C > 1$ such that 
\begin{align*}
\frac{1}{C} \delta_\Omega(z)^{\tau} \leq -\psi(z) \leq C \delta_\Omega(z)^{\tau}
\end{align*}
for all $z \in \Omega$. Then the \emph{Diederich-Forn{\ae}ss index of $\Omega$} is defined to be 
\begin{align*}
\eta(\Omega) : = \sup\{ \tau: \tau \text{ is a Diederich-Forn{\ae}ss exponent of } \Omega \}.
\end{align*}
It is known that $\eta(\Omega) > 0$ for many domains. For instance, Diederich-Forn{\ae}ss~\cite{DF1977} proved that $\eta(\Omega) > 0$ when $\partial \Omega$ is $\Cc^2$. Later, Harrington~\cite{H2008} generalized this result and proved that $\eta(\Omega) > 0$ when $\partial \Omega$ is Lipschitz. 

The hyperconvexity index, introduced by Chen~\cite{C2017}, is a similar quantity associated to a bounded pseudoconvex domain $\Omega \subset \Cb^d$. In particular, a number $\tau \in (0,1)$ is a called an \emph{hyperconvexity exponent of $\Omega$} if there exist a continuous plurisubharmonic function $\psi :\Omega \rightarrow (-\infty,0)$ and a constant $C > 1$ such that 
\begin{align*}
 -\psi(z) \leq C \delta_\Omega(z)^{\tau}
\end{align*}
for all $z \in \Omega$. Then the \emph{hyperconvex index of $\Omega$} is defined to be 
\begin{align*}
\alpha(\Omega) : = \sup\{ \tau: \tau \text{ is a hyperconvexity exponent of } \Omega \}.
\end{align*}
By definition $\alpha(\Omega) \geq \eta(\Omega)$. Further, it is sometimes easier to verify that the hyperconvexity index is positive (see~\cite[Appendix]{C2017}). 

For domains with positive hyperconvexity index we will establish the following lower bound for $d_{KE}$. 

\begin{theorem}\label{thm:KE}Suppose $\Omega \subset \Cb^d$ is a bounded pseudoconvex domain with $\alpha(\Omega) > 0$. If $z_0 \in \Omega$ and $\epsilon >0$, then there exists some $C=C(z_0, \epsilon) \leq0$ such that 
\begin{align*}
d_{KE}(z,z_0) \geq C + \left( \frac{\alpha(\Omega)}{2d-1}-\epsilon\right) \log \frac{1}{\delta_\Omega(z)}
\end{align*}
for all $z \in \Omega$. 
\end{theorem}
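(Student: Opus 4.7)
The plan is to reduce Theorem~\ref{thm:KE} to a Hopf-type lemma on Riemannian manifolds with Ricci curvature bounded below, and then to prove the latter via comparison with a radial barrier.

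For the reduction, fix $\epsilon > 0$ and choose $\tau \in (0, \alpha(\Omega))$ together with a small $\epsilon' > 0$ satisfying $\tau/(2d-1+\epsilon') \geq \alpha(\Omega)/(2d-1) - \epsilon$. By the definition of $\alpha(\Omega)$, there exist a continuous plurisubharmonic function $\psi: \Omega \to (-\infty, 0)$ and a constant $C_1 > 1$ with $-\psi(z) \leq C_1 \delta_\Omega(z)^\tau$ on $\Omega$. Since $\psi$ is plurisubharmonic, $v := -\psi$ is a continuous positive function on $\Omega$ that is superharmonic with respect to the K\"ahler metric $g_{KE}$.

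The main technical step is the following analog of the Hopf lemma: on a complete Riemannian $n$-manifold $(M, g)$ with $\operatorname{Ric}_g \geq -(n-1)g$, any continuous positive superharmonic function $v$ satisfies $v(z) \geq C_2\, e^{-(n-1+\epsilon')\, d_g(z, z_0)}$ for all $z \in M$, where $C_2 = C_2(z_0, \epsilon', v) > 0$. Applied to $v = -\psi$ on $(\Omega, g_{KE})$ --- which has real dimension $n = 2d$ and $\operatorname{Ric} = -(2d-1) g_{KE}$ --- and combined with $-\psi(z) \leq C_1 \delta_\Omega(z)^\tau$, taking logarithms yields
\[
d_{KE}(z, z_0) \geq \frac{\tau}{2d-1+\epsilon'} \log \frac{1}{\delta_\Omega(z)} + C_3
\]
for some $C_3 \in \Rb$, from which Theorem~\ref{thm:KE} follows by the choice of $\tau$ and $\epsilon'$ (replacing $C_3$ by $\min(C_3, 0)$ if necessary).

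To prove the Hopf lemma, I plan to use a radial barrier coming from the Laplacian comparison theorem. Writing $r(z) := d_g(z, z_0)$, one has $\Delta_g r \leq (n-1) \coth r$ off the cut locus of $z_0$, and hence the bounded radial function $\Phi(r) := \int_r^\infty \sinh(s)^{-(n-1)}\, ds$ (with $\Phi(r) \sim c_n e^{-(n-1) r}$ as $r \to \infty$) satisfies $\Delta_g(\Phi \circ r) \geq 0$ weakly on $M \setminus \{z_0\}$. Comparing $v$ with a multiple $c\,\Phi \circ r$ on annular regions $\{r_0 \leq r \leq R\}$ --- with $c$ chosen so that $c\,\Phi(r_0) \leq \min_{\partial B(z_0, r_0)} v$ and then letting $R \to \infty$ using $\Phi(R) \to 0$ --- produces the stated exponential lower bound. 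The main obstacle is achieving the sharp rate $(n-1)$: the barrier $\Phi$ saturates this rate on the hyperbolic model, but the argument must contend simultaneously with the cut locus of $z_0$ (handled via Calabi-type distributional inequalities), the limited regularity of $v$ (continuous rather than smooth, handled by approximation or viscosity methods), and the fact that $(n-1)\coth r$ tends to $n-1$ only asymptotically --- the last explaining the appearance of $\epsilon'$ in the exponent.
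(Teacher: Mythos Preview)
Your proposal is correct and takes essentially the same approach as the paper: reduce to a Riemannian Hopf-type lemma via the plurisubharmonic function supplied by the hyperconvexity index, then prove that lemma by comparing with a radial barrier on growing annuli using Laplacian comparison (with Calabi's trick for the cut locus) and letting the outer radius go to infinity. The only difference is the choice of barrier --- the paper uses $\exp\big(-(2d-1+\epsilon')r\big)$, which is subharmonic only outside a large ball (this is where the $\epsilon'$ enters), whereas your $\Phi(r)=\int_r^\infty \sinh(s)^{-(n-1)}\,ds$ is subharmonic on all of $M\setminus\{z_0\}$ and in fact yields the Hopf lemma with the sharp exponent $n-1$, so the $\epsilon$ in the final statement is needed only because $\alpha(\Omega)$ is a supremum.
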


In this note we have normalized the K{\"a}hler-Einstein metric to have Ricci curvature equal to $-(2d-1)$. If we instead normalized so that the Ricci curvature equals $-(2d-1)\lambda$ we would obtain the lower bound
\begin{align*}
C + \frac{1}{\sqrt{\lambda}}\left( \frac{\alpha(\Omega)}{2d-1}-\epsilon\right) \log \frac{1}{\delta_\Omega(z)}.
\end{align*}
  
In fact, we will show that Estimate~\eqref{eq:good_form} holds for any complete K\"ahler metric with Ricci curvature bounded from below. 

\begin{theorem}\label{thm:main} Suppose $\Omega \subset \Cb^d$ is a bounded pseudoconvex domain with $\alpha(\Omega) > 0$, $g$ is a complete K{\"a}hler metric on $\Omega$ with ${ \rm Ric}_g \geq -(2d-1)$, and $d_g$ is the distance associated to $g$. If $z_0 \in \Omega$ and $\epsilon > 0$, then there exists some $C=C(z_0,\epsilon) \leq0$  such that 
\begin{align*}
d_{g}(z_0, z) \geq C+ \left( \frac{\alpha(\Omega)}{2d-1}-\epsilon\right) \log \frac{1}{\delta_\Omega(z)}
\end{align*}
for all $z \in \Omega$. 
\end{theorem}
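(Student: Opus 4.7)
The plan is to reduce Theorem~\ref{thm:main} to a Hopf-type decay estimate for positive superharmonic functions on complete Riemannian manifolds with Ricci curvature bounded below, and then apply it to the plurisubharmonic potential produced by the hyperconvexity hypothesis.

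For the setup I fix $\epsilon > 0$ and choose $\tau < \alpha(\Omega)$ together with $\delta > 0$ such that $\tau/(2d-1+\delta) \geq \alpha(\Omega)/(2d-1) - \epsilon$. By definition of the hyperconvexity index there is a continuous plurisubharmonic $\psi: \Omega \to (-\infty,0)$ and $C_0 > 0$ with $-\psi(z) \leq C_0 \delta_\Omega(z)^\tau$ on $\Omega$. Because $g$ is K\"ahler, plurisubharmonicity of $\psi$ translates into $\Delta_g \psi \geq 0$ distributionally, so $w := -\psi$ is a positive superharmonic function on the complete Riemannian manifold $(\Omega, g)$ of real dimension $n = 2d$.

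The core step is the following Hopf-type lemma, which I would establish in the generality of complete Riemannian $n$-manifolds $(M, h)$ with $\mathrm{Ric}_h \geq -(n-1)$: for any positive superharmonic $w$ on $M$, any $z_0 \in M$, and any $\delta > 0$ there exists $c > 0$ such that
\[ w(z) \geq c \exp\bigl(-(n-1+\delta) d_h(z_0, z)\bigr) \quad \text{for all } z \in M. \]
Equivalently, $\abs{\nabla \log w}_h \leq n-1+\delta$ on $M$, which then yields the exponential bound by integration along a minimizing geodesic. To establish this gradient estimate I would follow the Cheng--Yau strategy: writing $u := \log w$, superharmonicity of $w$ gives $\Delta_h u \leq -\abs{\nabla u}^2$; inserting this into the Bochner formula for $\abs{\nabla u}^2$, using the Ricci lower bound together with a pointwise algebraic bound on $\abs{\nabla^2 u}^2$, and running a cutoff-and-maximum-principle argument on geodesic balls $B(z_0, R)$ before letting $R \to \infty$, yields the claimed bound. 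Standard regularization handles the fact that $w$ is only continuous.

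Applying the lemma with $(M, h) = (\Omega, g)$ to $w = -\psi$ and combining with $-\psi(z) \leq C_0 \delta_\Omega(z)^\tau$ yields, after taking logarithms and rearranging,
\[ d_g(z_0, z) \geq \frac{\tau}{2d-1+\delta}\log \frac{1}{\delta_\Omega(z)} + C' \]
for some constant $C'$, and by our choice of $\tau$ and $\delta$ the coefficient is at least $\alpha(\Omega)/(2d-1) - \epsilon$, which is the theorem. The principal obstacle is securing the \emph{sharp} constant $n-1$ in the gradient estimate: a direct execution of the Cheng--Yau argument typically yields only some larger dimension-dependent constant $C_n$, whereas the factor $\alpha(\Omega)/(2d-1)$ in the theorem demands exactly $n-1$, which is the exponential volume growth rate of hyperbolic $n$-space with Ricci $-(n-1)$. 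Achieving this will likely require a refined Bochner-type computation tailored to superharmonic (as opposed to merely harmonic) functions, or alternatively a direct barrier comparison of $\log w$ with the radial model function on hyperbolic space via the Laplacian comparison theorem.
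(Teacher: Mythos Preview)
Your reduction to a Hopf-type lemma for negative subharmonic (equivalently, positive superharmonic) functions on complete manifolds with Ricci bounded below is exactly the paper's strategy, and the deduction of the theorem from that lemma matches the paper almost verbatim.

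The divergence is in the proof of the lemma itself. The paper takes precisely the route you list only as a fallback: by the Laplacian comparison theorem one checks that the barrier $\Phi(x)=\exp\bigl(-(2d-1+\epsilon)\,d_g(x,x_0)\bigr)$ satisfies, distributionally on $X\setminus\{x_0\}$,
\[
\Delta_g\Phi \ \ge\ \Phi\Bigl[(2d-1+\epsilon)^2-(2d-1)(2d-1+\epsilon)\coth r\Bigr],
\]
so $\Phi$ is subharmonic outside some ball $B_g(x_0,r_0)$; one then picks $C>0$ with $\varphi\le -C\Phi$ on $\overline{B_g(x_0,r_0)}$ and applies the maximum principle to $\varphi+C\Phi$ on annuli $B_g(x_0,R)\setminus B_g(x_0,r_0)$, letting $R\to\infty$. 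This delivers the rate $2d-1+\epsilon$ for free, since that constant enters only through the explicit model computation above. Your primary plan, a Cheng--Yau gradient estimate $\abs{\nabla\log w}\le n-1+\delta$, aims for something strictly stronger than what is needed (it \emph{implies} the exponential lower bound but is not equivalent to it, contrary to what you wrote), and for merely superharmonic $w$ there is no off-the-shelf sharp version; as you yourself flag, the Bochner argument typically loses the constant. So promote the barrier comparison from backup to main argument: it is shorter and sidesteps the obstacle entirely.
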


\subsection{Lower bounds on the Bergman metric} It is conjectured that the Bergman distance on a bounded pseudoconvex domain with $\Cc^2$ boundary also satisfies Estimate~\eqref{eq:good_form}. In this direction, the best general result is due B{\l}ocki~\cite{B2005} who extended work of Diederich-Ohsawa~\cite{DO1995} and established a lower bound of the form 
 \begin{align*}
 C_1 + C_2\frac{ 1}{\log \log \left(   1/ \delta_\Omega(z) \right)} \log \frac{1}{\delta_\Omega(z)}
 \end{align*}
for the Bergman distance on a bounded pseudoconvex domain with $\Cc^2$ boundary. 

Notice that Theorem~\ref{thm:main} implies the conjectured lower bound for the Bergman distance under the additional assumption that the Ricci curvature of the Bergman metric is bounded from below.

 \subsection*{Acknowledgements} I would to thank Yuan Yuan and  Liyou Zhang for bringing the hyperconvexity index to my attention. This material is based upon work supported by the National Science Foundation under grant DMS-1904099.

\section{A Hopf Lemma for Riemannian manifolds}

The standard proof of the Hopf lemma implies the following estimate:

\begin{proposition}[Hopf Lemma]\label{prop:classical_hopf} If $D \subset \Rb^d$ is a bounded domain with $\Cc^2$ boundary and $\varphi : D \rightarrow (-\infty,0)$ is subharmonic, then there exists $C > 0$ such that 
\begin{align*}
\varphi(x) \leq -C\delta_D(x)
\end{align*}
for all $x \in D$. 
\end{proposition}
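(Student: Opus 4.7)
The strategy is the standard Hopf barrier argument together with the interior ball condition afforded by $\Cc^2$ boundary. First I would fix $r_0 > 0$ such that $D$ satisfies a uniform interior ball condition of radius $r_0$: for every $p \in \partial D$, there is a ball $B(q_p, r_0) \subset D$ with $p \in \partial B(q_p, r_0)$ and $q_p - p = r_0 n(p)$, where $n(p)$ is the inner unit normal at $p$. Let $K = \{y \in D : \delta_D(y) \geq r_0/2\}$; this is a compact subset of $D$, and by upper semicontinuity of $\varphi$ (subharmonic functions are USC) the quantity $M := \sup_K \varphi$ is attained and satisfies $M < 0$ since $\varphi < 0$ throughout $D$.

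Next I would reduce the problem to the strip $\{x \in D : \delta_D(x) < r_0/2\}$, since on $K$ the trivial bound $\varphi(x) \leq M \leq -(\abs{M}/\diam(D)) \delta_D(x)$ already gives what we want. For $x$ in the near-boundary strip, pick any closest boundary point $p$, so that $x - p = \delta_D(x) n(p)$ and $x$ lies inside the interior ball $B(q_p, r_0)$ with $\norm{x - q_p} = r_0 - \delta_D(x)$. It therefore suffices to produce a barrier on the annulus $A = B(q_p, r_0) \setminus \overline{B(q_p, r_0/2)}$ that dominates $\varphi$ and is linear in $\delta_D$ near the outer sphere.

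The key step is the barrier construction. Set $w(y) = e^{-\alpha \norm{y - q_p}^2} - e^{-\alpha r_0^2}$, which is subharmonic on $A$ provided $\alpha > 2d/r_0^2$, vanishes on $\partial B(q_p, r_0)$, and equals $c_0 := e^{-\alpha (r_0/2)^2} - e^{-\alpha r_0^2} > 0$ on $\partial B(q_p, r_0/2)$. Define $\tilde w := (M/c_0) w$, which is superharmonic, nonpositive on $A$, and satisfies $\tilde w \equiv M$ on the inner sphere and $\tilde w \equiv 0$ on the outer sphere. Then $\varphi - \tilde w$ is subharmonic on $A$. On the inner sphere $\varphi \leq M = \tilde w$, while at any boundary point $\zeta$ of $A$ lying on the outer sphere $\tilde w(\zeta) = 0$ and $\limsup_{y \to \zeta,\, y \in A} \varphi(y) \leq 0$ (using $\varphi < 0$ on $D$ together with upper semicontinuity). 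The subharmonic maximum principle for bounded domains, in its $\limsup$ version, then yields $\varphi \leq \tilde w$ on $A$.

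Finally, evaluating at $x$ with $\norm{x - q_p} = r_0 - \delta_D(x)$, the mean value theorem applied to $s \mapsto e^{-\alpha s^2}$ on $[r_0/2, r_0]$ produces
\begin{align*}
e^{-\alpha (r_0 - \delta_D(x))^2} - e^{-\alpha r_0^2} \geq c_1 \delta_D(x)
\end{align*}
for some $c_1 > 0$ uniform over $x$ in the strip, and hence $\varphi(x) \leq \tilde w(x) \leq -(\abs{M} c_1/c_0) \delta_D(x)$. Combining with the trivial estimate on $K$ gives a single global constant $C > 0$. The only genuine subtlety to watch is that $\varphi$ is merely upper semicontinuous and undefined on $\partial D$, so one must invoke the $\limsup$ form of the maximum principle; apart from that, every step is either the standard barrier calculation or a one-line reduction.
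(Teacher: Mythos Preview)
Your argument is correct and is precisely the ``standard proof of the Hopf lemma'' that the paper alludes to; the paper itself does not supply a proof of this proposition, treating it as classical. Your interior-ball-plus-exponential-barrier construction, together with the $\limsup$ form of the maximum principle to handle the boundary point where $\varphi$ is undefined, is exactly the intended filling-in.
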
 

%
%
%
 
We will prove a variant of (this version of) the Hopf Lemma for Riemannian manifolds with Ricci curvature bounded below. 

Given a complete Riemannian manifold $(X,g)$, let $d_g$ denote the distance induced by $g$, let $\nabla_g$ denote the gradient, and let $\Delta_g$ denote the Laplace-Beltrami operator on $X$. A function $\varphi : X \rightarrow \Rb$ is \emph{subharmonic} if $\Delta_g \varphi \geq 0$ in the sense of distributions.

\begin{proposition}\label{prop:hopf} Suppose that $(X,g)$ is a complete Riemannian manifold with ${ \rm Ric}(g) \geq -(2d-1)$. If $x_0 \in X$, $\epsilon > 0$, and  $\varphi : X \rightarrow (-\infty,0)$ is subharmonic, then there exists $C> 0$ such that
\begin{align*}
\varphi(x) \leq -C\exp \Big( -(2d-1+\epsilon)d_g(x,x_0) \Big)
\end{align*}
for all $x \in X$. 
\end{proposition}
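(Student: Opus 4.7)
The plan is to construct an exponentially decaying barrier adapted to the Ricci lower bound, subharmonic outside a compact neighborhood of $x_0$, and then push the desired inequality outward using the maximum principle. Let $r(x)=d_g(x,x_0)$ and $\beta=2d-1+\epsilon$. The barrier will be $h(x)=e^{-\beta r(x)}$, and the goal is to choose $C>0$ so that $\varphi+Ch\leq 0$ on all of $X$.

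The first analytic input is the Laplacian comparison theorem: since $(X,g)$ has real dimension $2d$ and satisfies $\mathrm{Ric}(g)\geq -(2d-1)$, one has
\[
\Delta_g r \leq (2d-1)\coth(r)
\]
on $X\setminus\{x_0\}$, interpreted in the barrier sense. Because $\coth(r)\to 1$ as $r\to\infty$, I can fix $R_0>0$ so that $(2d-1)\coth(R_0)\leq \beta$. A direct computation, using $|\nabla_g r|=1$ a.e., yields
\[
\Delta_g h = \beta\, h\,(\beta-\Delta_g r),
\]
which is nonnegative on $\{r\geq R_0\}$. Hence $\varphi+Ch$ is subharmonic on this region for every $C\geq 0$.

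Completeness and Hopf--Rinow make $\overline{B(x_0,R_0)}$ compact, and since $\varphi$ is upper semicontinuous with values in $(-\infty,0)$, the quantity $M_0:=\sup_{\overline{B(x_0,R_0)}}\varphi$ is strictly negative. Set $C:=|M_0|$. On $\overline{B(x_0,R_0)}$, $e^{-\beta r(x)}\leq 1$ gives $-Ce^{-\beta r(x)}\geq -C=M_0\geq \varphi(x)$, which is the desired estimate. On the exterior, I apply the weak maximum principle to $F:=\varphi+Ch$ on each annulus $\{R_0\leq r\leq R\}$: the boundary values satisfy $F|_{r=R_0}\leq M_0+Ce^{-\beta R_0}=-C+Ce^{-\beta R_0}\leq 0$ and $F|_{r=R}<Ce^{-\beta R}$ (since $\varphi<0$), so $F\leq Ce^{-\beta R}$ throughout the annulus. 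Letting $R\to\infty$ for each fixed $x$ with $r(x)\geq R_0$ gives $F(x)\leq 0$, i.e.\ $\varphi(x)\leq -Ce^{-\beta r(x)}$, completing the estimate.

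The main technical nuisance I anticipate is the non-smoothness of $r$ on the cut locus of $x_0$, so that the computation of $\Delta_g h$ and the Laplacian comparison must be interpreted distributionally (or equivalently via Calabi's trick of comparing to smooth upper barriers at each point). This is standard and does not affect the weak maximum principle arguments used above.
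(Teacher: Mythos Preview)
Your argument is correct and follows essentially the same route as the paper: the same exponential barrier $e^{-\beta r}$ is shown to be subharmonic outside a ball via the Laplacian comparison theorem, a constant $C$ is chosen on the compact ball using that $\varphi<0$ there, and the inequality is then pushed outward by the maximum principle on annuli $\{R_0\le r\le R\}$ with $R\to\infty$. The paper isolates the subharmonicity of the barrier as a separate lemma and carries out the distributional argument on the cut locus explicitly (via Calabi's trick and a partition of unity), which is precisely the ``technical nuisance'' you identified.
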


We require one lemma. Given a complete Riemannian manifold $(X,g)$, $x \in X$, and $r > 0$ define
\begin{align*}
B_g(x,r) = \{ y \in X : d_g(x,y) < r\}.
\end{align*}

\begin{lemma} Suppose that $(X,g)$ is a complete Riemannian manifold with ${ \rm Ric}(g) \geq -(2d-1)$. Then for every $x_0 \in X$ and $\epsilon > 0$, there exists $r_0 > 0$ such that the function
\begin{align*}
\Phi(x)= \exp \Big( -(2d-1+\epsilon)d_g(x,x_0) \Big)
\end{align*}
is subharmonic on $X \setminus B_g(x_0,r_0)$.
\end{lemma}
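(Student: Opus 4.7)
The plan is to combine the Laplacian comparison theorem for $\rho(x) := d_g(x,x_0)$ with a direct one-variable computation. Setting $a := 2d-1+\epsilon > 0$, so $\Phi = e^{-a\rho}$, a routine calculation (at points where $\rho$ is smooth, using $\abs{\nabla_g \rho}=1$) gives
\begin{equation*}
\Delta_g \Phi = a^2 e^{-a\rho}\abs{\nabla_g \rho}^2 - a\, e^{-a\rho}\Delta_g \rho = a\,\Phi\bigl(a - \Delta_g \rho\bigr).
\end{equation*}
Since $\Phi > 0$ and $a > 0$, proving that $\Phi$ is subharmonic reduces to establishing the pointwise upper bound $\Delta_g \rho \leq a$ (at first on the smooth set of $\rho$, and then distributionally on all of $X \setminus B_g(x_0,r_0)$).

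Next, since the intended application is to $X = \Omega \subset \Cb^d$ with $\dim_\Rb X = 2d$, the Laplacian comparison theorem applied under $\mathrm{Ric}(g) \geq -(2d-1)$ yields, at every smooth point of $\rho$,
\begin{equation*}
\Delta_g \rho(x) \leq (2d-1)\coth\bigl(\rho(x)\bigr).
\end{equation*}
Because $\coth(r)\to 1$ monotonically as $r\to\infty$ and $a > 2d-1$, there is an $r_0 > 0$ such that $(2d-1)\coth(r) \leq a$ for every $r \geq r_0$. Combining these, $\Delta_g \rho \leq a$ at every smooth point of $\rho$ in $X \setminus B_g(x_0,r_0)$, so $\Delta_g \Phi \geq 0$ pointwise there.

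The one substantive obstacle is the cut locus of $x_0$, at which $\rho$ (and hence $\Phi$) is only Lipschitz. This is handled by the classical extension of the Laplacian comparison theorem to the distributional (equivalently, barrier) sense, which asserts that $\Delta_g \rho \leq (2d-1)\coth(\rho)$ in fact holds in the sense of distributions on all of $X \setminus \{x_0\}$. Because the identity $\Delta_g \Phi = a\Phi(a - \Delta_g \rho)$ is linear in $\Delta_g \rho$ with positive coefficient, this distributional inequality lifts to $\Delta_g \Phi \geq 0$ as distributions on $X\setminus B_g(x_0,r_0)$, completing the proof of subharmonicity.
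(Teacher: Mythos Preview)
Your argument is correct and follows the same approach as the paper: compute $\Delta_g \Phi = a\Phi\bigl(a - \Delta_g \rho\bigr)$ via the chain rule, invoke the Laplacian comparison bound $\Delta_g \rho \leq (2d-1)\coth\rho$, and choose $r_0$ so that $(2d-1)\coth r \leq a$ for $r \geq r_0$. The only difference is in how the cut locus is handled: you cite the distributional form of the Laplacian comparison theorem as a black box and then pass the inequality through the chain rule (which is legitimate since $\rho$ is Lipschitz with $\Delta_g\rho$ a Radon measure and $\Phi$ is a smooth, monotone function of $\rho$), whereas the paper carries this out explicitly via Calabi's trick---replacing $\rho$ by $r_{q,\delta}(x)=d_g(x,\sigma(\delta))+\delta$ so that the comparison holds smoothly near each $q$, then patching with a partition of unity and letting $\delta\to 0$.
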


When the function $x \rightarrow d_g(x,x_0)$ is smooth on $X \setminus \{x_0\}$, the lemma is an immediate consequence of the Laplacian comparison theorem. We prove the general case by simply modifying the proof of the Laplacian comparison theorem given in~\cite{P2016}. 

\begin{proof} Let $r(x) = d_g(x,x_0)$. We will show that 
\begin{align*}
\Delta_g \Phi(x) \geq \Phi(x) \left( (2d-1+\epsilon)^2 -(2d-1)(2d-1+\epsilon)  \coth r(x) \right)
\end{align*}
in the sense of distributions on $X \setminus \{x_0\}$, which implies the lemma. 

Fix $q \in X$ and let $\sigma : [0,T] \rightarrow X$ be a unit speed geodesic joining $x_0$ to $q$. Then for $\delta \in (0,T)$ consider the function $r_{q,\delta}(x) = d_g(x,\sigma(\delta)) + \delta$. By the proof of~\cite[Lemma 7.1.9]{P2016}, $q$ is not in the cut locus of $\sigma(\delta)$. In particular, there exists a neighborhood $\Oc_q$ of $q$ such that $r_{q,\delta}$ is $\Cc^\infty$ and   
\begin{align*}
\norm{\nabla_g r_{q,\delta}} \equiv 1
\end{align*}
on $\Oc_q$, see~\cite[Proposition III.4.8]{S1996}. Further, by the Laplacian comparison theorem 
\begin{align*}
\Delta_g r_{q,\delta}(x) \leq (2d-1)\coth \left(r_{q,\delta}(x)-\delta\right)
\end{align*}
on $\Oc_q$, see~\cite[Lemma 7.1.9]{P2016}. Next consider the function $\Phi_{q,\delta} : \Oc_q \rightarrow [0,\infty)$ defined by  
\begin{align*}
\Phi_{q,\delta}(x) = \exp \Big( -(2d-1+\epsilon)r_{q,\delta}(x) \Big).
\end{align*}
Then 
\begin{align}\label{eq:smooth_Lap}
\Delta_g & \Phi_{q,\delta}(x) = \Phi_{q,\delta}(x) \left( (2d-1+\epsilon)^2\norm{\nabla_g r_{q,\delta}}^2  -(2d-1+\epsilon)\Delta_g r_{q,\delta}(x)\right)\nonumber\\
&\geq \Phi_{q,\delta}(x) \left( (2d-1+\epsilon)^2  -(2d-1+\epsilon)(2d-1)\coth \left(r_{q,\delta}(x)-\delta\right)\right).
\end{align}

Fix a partition of unit $1 = \sum_{j=1}^\infty \chi_j$ subordinate to the open cover $X = \cup_{q \in X} \Oc_q$. For each $j \in \Nb$, fix $q_j \in X$ such that ${\rm supp}(\chi_j) \subset \Oc_{q_j}$. 

Now suppose that $\psi : X \setminus \{x_0\} \rightarrow [0,\infty)$ is a compactly supported smooth function. Then by the dominated convergence theorem (notice that the sum is finite)
\begin{align*}
\int_X \Phi(x) \Delta_g \psi(x) dx = \lim_{\delta \rightarrow 0^+} \sum_{j=1}^{\infty} \int_{\Oc_{q_j}} \Phi_{q_j,\delta}(x) \Delta_g(\chi_j(x)\psi(x)) dx.
\end{align*}
By integration by parts and Equation~\eqref{eq:smooth_Lap}
\begin{align*}
\int_{\Oc_{q_j}} & \Phi_{q_j,\delta}(x) \Delta_g(\chi_j(x)\psi(x)) dx=  \int_{\Oc_{q_j}} \chi_j(x)\psi(x)\Delta_g\Phi_{q_j,\delta}(x)  dx \\
& \geq  \int_{\Oc_{q_j}} \chi_j(x)\psi(x)\Phi_{q,\delta}(x) \left( (2d-1+\epsilon)^2  -(2d-1+\epsilon)(2d-1)\coth \left(r_{q,\delta}(x)-\delta\right)\right)dx.
\end{align*}
So by applying the dominated convergence theorem again
\begin{align*}
\int_X \Phi(x) \Delta_g \psi(x) dx  \geq \int_X \Phi(x) \left( (2d-1+\epsilon)^2 -(2d-1)(2d-1+\epsilon)  \coth r(x) \right) \psi(x) dx. 
\end{align*}

Hence 
\begin{align*}
\Delta_g \Phi(x) \geq \Phi(x) \left( (2d-1+\epsilon)^2 -(2d-1)(2d-1+\epsilon)  \coth r(x) \right)
\end{align*}
in the sense of distributions on $X \setminus \{x_0\}$.
\end{proof}

%
%
%
%
%

\begin{proof}[Proof of Proposition~\ref{prop:hopf}]
Fix $r_0 > 0$ such that 
\begin{align*}
x  \rightarrow  \exp \Big( -(2d-1+\epsilon) d_g(x,x_0) \Big)
\end{align*}
is subharmonic on $X \setminus B_g(x_0,r_0)$. Since $\varphi < 0$, there exists $C >0$ such that 
\begin{align*}
\varphi(x) \leq -C\exp \Big( -(2d-1+\epsilon) d_g(x,x_0) \Big)
\end{align*}
for all $x \in B_g(x_0,r_0)$. Then consider 
\begin{align*}
f(x) = \varphi(x) + C\exp \Big( -(2d-1+\epsilon) d_g(x,x_0) \Big).
\end{align*}
Then $f$ is subharmonic on $X \setminus B_g(x_0,r)$. Fix $R > r_0$ and let 
\begin{align*}
A_{R} = B_g(x_0,R) \setminus B_g(x_0,r_0)
\end{align*}
Then $f(x) \leq 0$ on $\partial B_g(x_0,r_0)$ and 
\begin{align*}
f(x) \leq C\exp \Big( -(2d-1+\epsilon) R \Big)
\end{align*}
on $\partial B_g(x_0,R)$. So by the maximum principle 
\begin{align*}
f(x) \leq C\exp \Big( -(2d-1+\epsilon) R \Big)
\end{align*}
on $A_{R}$. Then sending $R \rightarrow 0$ shows that 
\begin{align*}
f(x) \leq 0
\end{align*}
on $X \setminus B_g(x_0,r_0)$. So 
\begin{align*}
\varphi(x) \leq -C\exp \Big( -(2d-1+\epsilon) d_g(x,x_0) \Big)
\end{align*}
for all $x \in X$. 
\end{proof}

\section{Proof of Theorem~\ref{thm:main}}

Suppose $\Omega \subset \Cb^d$ is a bounded pseudoconvex domain with $\alpha(\Omega) > 0$, $g$ is a complete K{\"a}hler metric on $\Omega$ with ${ \rm Ric}_g \geq -(2d-1)$, $z_0 \in \Omega$, and $\epsilon > 0$. 

Fix $\epsilon_1 > 0$ and a hyperconvexity exponent $\tau \in (0,1)$ such that 
\begin{align*}
\frac{\tau}{2d-1+\epsilon_1} \geq \frac{\alpha(\Omega)}{2d-1} - \epsilon. 
\end{align*}
Then there exists a continuous plurisubharmonic function $\psi :\Omega \rightarrow (-\infty,0)$ and $a > 1$ such that 
\begin{align*}
-\psi(z) \leq a \delta_\Omega(z)^{\tau}
\end{align*}
for all $z \in \Omega$.

Since $\psi$ is plurisubharmonic and $g$ is K\"ahler, $\psi$ is subharmonic on $(\Omega, g)$. So by Proposition~\ref{prop:hopf} there exists $C_0 > 0$ such that 
\begin{align*}
\psi(z) \leq -C_0\exp \Big( -(2d-1+\epsilon_1) d_g(x,x_0) \Big)
\end{align*}
for all $z \in \Omega$. Then 
\begin{align*}
-a\delta_\Omega(z)^{\tau} \leq -C_0\exp \Big( -(2d-1+\epsilon_1) d_g(x,x_0) \Big)
\end{align*}
and so there exists $C_1 \in \Rb$ such that 
\begin{align*}
C_1 + \left( \frac{\tau}{2d-1+\epsilon_1} \right) \log \frac{1}{\delta_\Omega(z)} \leq d_g(z,z_0)
\end{align*}
for all $z \in \Omega$. Since the set $\{ z \in \Omega : \delta_\Omega(z) \geq 1\}$ is compact and
\begin{align*}
\frac{\tau}{2d-1+\epsilon_1} \geq \frac{\alpha(\Omega)}{2d-1} - \epsilon,
\end{align*}
 there exists $C \in \Rb$ such that  
\begin{align*}
C + \Big( \frac{\alpha(\Omega)}{2d-1}-\epsilon \Big) \log \frac{1}{\delta_\Omega(z)} \leq d_g(z,z_0)
\end{align*}
for all $z \in \Omega$.

\bibliographystyle{alpha}
\bibliography{complex_kob}

\end{document}